\newtheorem{theorem}{Theorem}
\newtheorem{lemma}[theorem]{Lemma}
\theoremstyle{definition}
\newtheorem{remark}[theorem]{Remark}
\newcommand{\IB}{\mathbb{B}}
\newcommand{\IC}{\mathbb{C}}
\newcommand{\IF}{\mathbb{F}}
\newcommand{\IN}{\mathbb{N}}
\newcommand{\IR}{\mathbb{R}}
\newcommand{\cH}{\mathcal{H}}
\newcommand{\cP}{\mathcal{P}}
\newcommand{\Ent}{\mathrm{Ent}}
\newcommand{\vol}{\mathrm{vol}}
\newcommand{\dd}{\mathrm{d}}
\newcommand{\bpb}{\mathbb{B}_{p,\beta}^n}
\title[Next-order asymptotics for the volume of Schatten balls]{Next-order asymptotics for the volume of \\ Schatten balls}
\author[M.~Sonnleitner]{Mathias Sonnleitner}
\address[]{Institute for Mathematical Stochastics, University of Münster, Orl\'eans-Ring 10, 48149 Münster, Germany and Faculty of Mathematics, University of Bielefeld, Universitätsstrasse 25, 33615 Bielefeld, Germany}
\email{mathias.sonnleitner@uni-muenster.de}
\subjclass[2020]{52A23; 47B10 (Primary)  60B20; 82B21 (Secondary)}
\keywords{entropy; partition function; Schatten norm; Ullman distribution}
\begin{document}

\begin{abstract}
	The volume of the unit balls of self-adjoint finite-dimensional Schatten $p$-classes of $n\times n$-matrices, $1\le p\le \infty$, is only known exactly for $p=2$ and $p=\infty$. We give an asymptotic expansion of the logarithmic volume to order $o(n)$ for all $p>1$. The proof rests on asymptotics for the partition function of $\beta$-ensembles due to Lebl\'e and Serfaty [Invent. Math. 210(3):645--757, 2017]. Independently, the case $p\ge 2$ was obtained by Dworaczek Guera, Memin and Pain [arXiv:2511.05386]. In the complex case the asymptotic expansion is continued to order $o(1)$ for all $p\ge 1$. 
\end{abstract}

\maketitle

\section{Introduction}

The Schatten or Schatten-von Neumann class consists of all compact operators between two Hilbert spaces whose sequence of singular values belongs to the Lebesgue sequence space $\ell_p$. It can be seen a non-commutative version of $\ell_p$ and is named after Schatten and von Neumann \cite{Sch46,Sch50,SVN46}. We refer to \cite{Pie07,PX03} for an overview of their role in functional analysis and Banach space geometry.

In this article, we study the geometry of finite-dimensional Schatten classes and in particular their unit balls. These have been investigated as test cases for conjectures in asymptotic geometric analysis \cite{DFG+23,GMP25,GP07,KMP98,RV20}, in the context of low-rank matrix recovery and information-based complexity \cite{CR12,CK15,HPV17,HPV21,PS22} or in quantum information theory \cite{ASW11,Wil13}. Let $\mathbb{F}_1=\IR$, $\mathbb{F}_2=\IC$ and $\mathbb{F}_4=\mathbb{H}$ and denote the singular values of a matrix $A\in \mathbb{F}_{\beta}^{n\times n}$, where $\beta\in\{1,2,4\}$, by 
\[
s_1(A)\ge \cdots \ge s_n(A)\ge 0.
\] 
For $1\le p\le \infty$, the $p$-Schatten norm of $A$ is 
\[
\|A\|_p
=
\begin{cases}
	\Big(\sum_{j=1}^{n}s_j(A)^p\Big)^{1/p}&\colon p<\infty\\
	s_1(A) &\colon p=\infty.
\end{cases}
\]

The associated unit ball and its intersection with the subspace of self-adjoint matrices $\cH_n(\IF_{\beta})=\{A\in \IF_{\beta}^{n\times n}\colon A^{*}=A\}$  are denoted by
\[
B_{p,\beta}^n
=\{A\in \IF_{\beta}^{n\times n}\colon \|A\|_p\le 1\}\quad \text{and}\quad
\bpb
= B_{p,\beta}^n \cap \cH_n(\IF_{\beta}).
\]
For self-adjoint matrices the Schatten $p$-norm $\|A\|_p$ becomes a $p$-norm of eigenvalues. In case of non-self adjoint matrices one can also study the rectangular case, see e.g. \cite{JKP24}.

We shall identify $\IF_{\beta}^{n\times n}$ with $\IR^{\beta n^2}$ and $\cH_n(\IF_{\beta})$ with $\IR^{d_n}$, where 
\begin{align}\label{eq:dn}
d_n:=\beta\frac{n(n-1)}{2}+n.
\end{align}
Then $B_{p,\beta}^n$ and $\bpb$ are convex bodies (i.e. compact convex sets with nonempty interior) in $\IR^{\beta n^2}$ and $\IR^{d_n}$, respectively. In the following, we study their volumes $\vol(B_{p,\beta}^n)$ and $\vol(\bpb)$. Our aim is to give more precise asymptotics of the volume of $\bpb$ as $n\to \infty$.

The exact volume of $B_{p,\beta}^n$ and $\bpb$ is only known in the cases $p=2$ and $p=\infty$. For $p=2$, the Schatten $2$-balls are Euclidean unit balls of corresponding dimension and thus
\begin{equation*} 
\vol(B_{2,\beta}^n)
=\frac{\pi^{\beta n^2/2}}{\Gamma(1+\frac{\beta n^2}{2})}
\qquad\text{and}\qquad
\vol(\IB_{2,\beta}^n)
=\frac{\pi^{d_n/2}}{\Gamma(1+\frac{d_n}{2})},
\end{equation*} 
where $\Gamma(x)=\int_0^{\infty}t^{x-1}e^{-t}\dd t$ denotes the Gamma function. For $p=\infty$ it follows from Saint Raymond's work \cite{SR84} and Selberg's integral formula \cite{And91,Sel44} (see also Remark~\ref{rem:infty-non-sa} below) that
\begin{equation} \label{eq:inf-exact}
\vol(B_{\infty,\beta}^n)
=\frac{\prod_{j=0}^{n-1}\Gamma( 1+j\frac{\beta}{2} )}{\prod_{j=n}^{2n-1}\Gamma( 1+j\frac{\beta}{2})}\pi^{\beta n^2/2},
\end{equation}
and in the self-adjoint case it holds that
\begin{equation} \label{eq:inf-sa-exact}
\vol(\IB_{\infty,\beta}^n)
=2^{d_n}(2\pi)^{\beta n(n-1)/4}\prod_{j=0}^{n-1}\frac{\Gamma( 1+j\frac{\beta}{2})^2}{\Gamma( 2+(n+j-1)\frac{\beta}{2} )}.
\end{equation}
For general $1\le p\le \infty$, exact volumes are unknown and instead their asymptotic behavior has been studied. Saint Raymond \cite{SR84} derived $A(p)>0$ such that, for $\beta\in\{1,2\}$, 
\begin{equation} \label{eq:sim-asym}
\lim_{n\to \infty}n^{1/2+1/p}\vol(B_{p,\beta}^n)^{1/\beta n^2}
=\sqrt{\frac{2\pi}{\beta}}e^{3/4}A(p)
\end{equation}
and determined $A(2)=e^{-1/4}$ and $A(\infty)=\frac{1}{2}$. Gu\'{e}don and Paouris \cite{GP07} showed that $\vol(B_{p,4}^n)^{1/4 n^2}$ and $\vol(\bpb)^{1/d_n}, \beta\in\{1,2,4\}$ are of the same order $n^{-1/2-1/p}$. Kabluchko, Prochno and Thäle \cite{KPT20a} determined
\begin{equation} \label{eq:delta}
A(p)
=\frac{1}{2}\bigg(\frac{p\sqrt{\pi}\Gamma(\frac{p}{2})}{\sqrt{e}\Gamma(\frac{p+1}{2})}\bigg)^{1/p}
\end{equation}
and moreover showed in \cite{KPT20b} that, for $\beta\in \{1,2,4\}$,
\begin{equation} \label{eq:sim-sym}
	\lim_{n\to \infty}n^{1/2+1/p}\vol(\bpb)^{1/d_n}
	=\sqrt{\frac{4\pi}{\beta}}e^{3/4}A(p).
\end{equation}
We remark that one way to see that $A(p)$ is decreasing in $p$ is to observe that the same is true for the normalized volume $\vol(n^{1/p}\bpb)$. Recently, Dadoun, Fradelizi, Gu\'{e}don and Zitt \cite[Remark 4.8]{DFG+23} showed \eqref{eq:sim-asym} for $\beta=4$. Thus, the asymptotics in \eqref{eq:sim-asym} and \eqref{eq:sim-sym} hold in all cases $\beta\in \{1,2,4\}$ and $1\le p\le \infty$. 

In case of $p=\infty$, the quantity $A(\infty)=\frac{1}{2}$ can be expressed in terms of the minimal logarithmic energy of probability measures supported in $[-1,1]$ attained by the arcsine distribution, see \cite{ST97}. We refer to \cite{BDF+24} and \cite{Bra24} for related asymptotics for discrete minimal logarithmic energy. 

In case of $p<\infty$, the quantity $A(p)$ in \eqref{eq:delta} can be expressed as 
\[
A(p)
=\frac{1}{2}\Big(\frac{p\sqrt{\pi}\Gamma(\frac{p}{2})}{\sqrt{e}\Gamma(\frac{p+1}{2})}\Big)^{1/p}
=\frac{1}{2}\Big(\frac{1}{\sqrt{e}\alpha_p}\Big)^{1/p},
\]
where $\alpha_p=\int_{\IR} |x|^p\dd\mu_p(x)$ is the $p$-th absolute moment of the Ullman distribution $\mu_p$ which has density
\begin{equation} \label{eq:ullman-1}
f_p(x)
= \frac{p}{\pi}\int_{|x|}^1 \frac{t^{p-1}}{\sqrt{t^2-x^2}}\dd t, \qquad x\in [-1,1].
\end{equation}
The Ullman distribution originated in the theory of orthogonal polynomials on $\IR$ with weight $e^{-c|x|^p}$ and $A(p)$ is strongly related to the asymptotics of the sequence of leading coefficients, see \cite{ST97} and \cite[Sec. 4.1]{Ass87}. 

Our main contribution is the following asymptotic expansion for the logarithmic volume $\ln \vol(\bpb)$. Recall that the differential entropy of a probability measure $\mu$ on $\IR$ is
\[
\Ent(\mu)
= -\int_{\IR}\ln(f(x))f(x)\dd x, 
\]
whenever $\mu$ admits a Lebesgue density $f$ and $\Ent(\mu)=-\infty$ otherwise. 

\begin{theorem} \label{thm:main}
Let $\beta\in \{1,2,4\}$. If $p>1$, then, as $n\to\infty$, 
\begin{align}\label{eq:main-1}
\ln \vol(\bpb)
=&-\frac{\beta}{2}\Big(\frac{1}{2}+\frac{1}{p}\Big)n^2\ln n\notag\\
&+\frac{\beta}{2}\Big(\frac{1}{2}\ln\frac{4\pi}{\beta}+\frac{3}{4}+\ln A(p)\Big)n^2\notag\\
&-\Big(1-\frac{\beta}{2}\Big)\Big(\frac{1}{2}+\frac{1}{p}\Big)n\ln n\notag\\
&+\Big(1-\frac{\beta}{2}\Big)\Big(\frac{1}{2}\ln\frac{4}{\beta\pi}+\frac{1}{2}+\frac{1}{2p}+\ln A(p)+\Ent(\mu_p)\Big)n +o(n).
\end{align}
Moreover, if $\beta=2$ and $p\ge 1$, then
\begin{equation} \label{eq:main-2}
\ln \vol(\IB_{p,2}^n)
=-\Big(\frac{1}{2}+\frac{1}{p}\Big)n^2\ln n
+\Big(\frac{1}{2}\ln 2\pi +\frac{3}{4}+\ln A(p)\Big)n^2
-\ln n+M_{p}+o(1),
\end{equation}
where
\[
M_{p}=\frac{5}{12}\ln p+\frac{1}{12}\ln 2.
\]
\end{theorem}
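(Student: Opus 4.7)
The plan is to reduce $\vol(\bpb)$ to the partition function of a one-dimensional $\beta$-ensemble with confining potential $V_p(x)=|x|^p$ and then invoke the Leblé--Serfaty asymptotic expansion. By the Weyl integration formula on $\cH_n(\IF_{\beta})$,
\[
\vol(\bpb) = c_{n,\beta}\int_{\{\|\lambda\|_p\le 1\}}\prod_{1\le i<j\le n}|\lambda_i-\lambda_j|^{\beta}\,\dd\lambda,
\]
with an explicit constant $c_{n,\beta}$ coming from integration over the unitary orbits, whose Stirling expansion is straightforward (alternatively, it can be calibrated against the exact formula \eqref{eq:inf-sa-exact}). The Vandermonde is homogeneous of degree $\beta n(n-1)/2 = d_n-n$, so the identity
\[
\int_{\{\|\lambda\|_p\le 1\}}F(\lambda)\,\dd\lambda = \frac{1}{\Gamma(1+d_n/p)}\int_{\IR^n}F(\lambda)\, e^{-\|\lambda\|_p^p}\,\dd\lambda
\]
(valid for $F$ homogeneous of degree $d_n-n$), followed by the rescaling $\lambda_i = n^{1/p}\mu_i$, yields
\[
\vol(\bpb) = \frac{c_{n,\beta}\, n^{d_n/p}}{\Gamma(1+d_n/p)}\, Z_{n,\beta}(V_p),\qquad Z_{n,\beta}(V):=\int_{\IR^n}\prod_{i<j}|\mu_i-\mu_j|^{\beta}e^{-n\sum V(\mu_i)}\,\dd\mu.
\]

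For $p\ge 3/2$ the potential $V_p$ is regular enough (and its equilibrium measure is off-critical with the expected square-root edge behavior) for Leblé--Serfaty's theorem to apply, providing a full expansion of $\ln Z_{n,\beta}(V_p)$ up to $o(n)$. The equilibrium measure is exactly the Ullman distribution $\mu_p$ from \eqref{eq:ullman-1}; its minimal weighted logarithmic energy is directly computable in terms of $\alpha_p$ and hence of $A(p)$ via \eqref{eq:delta}, and the $n$-order term in Leblé--Serfaty involves $\Ent(\mu_p)$ together with an explicit $\beta$-dependent universal constant. Combining this expansion with the Stirling asymptotics of $\ln\Gamma(1+d_n/p)$ and of $\ln c_{n,\beta}$, and collecting the coefficients of $n^2\ln n$, $n^2$, $n\ln n$ and $n$, would then produce \eqref{eq:main-1}.

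For the refinement \eqref{eq:main-2}, both the $n\ln n$ and the $n$ coefficients in \eqref{eq:main-1} vanish at $\beta=2$, so the next correction is of order $O(\ln n)+O(1)$. Since $\beta=2$ corresponds to a determinantal, unitary-invariant ensemble, classical orthogonal-polynomial / Riemann--Hilbert asymptotics with weight $e^{-n|x|^p}$ provide $\ln Z_{n,2}(V_p)$ to order $O(1)$ for all $p\ge 1$, bypassing the Leblé--Serfaty regularity restriction and yielding the $-\ln n$ correction together with the explicit $O(1)$ constant $M_p$ (which absorbs the free energy, the entropy, the Barnes-$G$ contributions from Stirling, and the lower-order constants in $c_{n,2}$).

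The hard part will be the precise identification of the universal constants produced by Leblé--Serfaty, and by the Riemann--Hilbert analysis in the complex case, with the sharp combinations $\frac{1}{2}\ln\frac{4}{\beta\pi}+\frac{1}{2}+\frac{1}{2p}+\ln A(p)+\Ent(\mu_p)$ and $M_p$ appearing in the theorem; this requires carefully unpacking their formulas for the $n$-order term and matching against the known energy and entropy of $\mu_p$. Verifying Leblé--Serfaty's hypotheses at the threshold $p=3/2$ is another delicate point, and should ultimately trace back to the smoothness of $V_p$ at the origin and the corresponding behavior of the Ullman density there; the improvement to $p\ge 1$ in \eqref{eq:main-2} is precisely what the determinantal structure of the $\beta=2$ ensemble buys.
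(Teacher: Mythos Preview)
Your proposal is correct and follows essentially the same route as the paper: Weyl integration plus the Dirichlet-type Gamma identity reduce $\vol(\bpb)$ to the partition function $Z_{n,p,\beta}$, whose $o(n)$-expansion comes from Lebl\'e--Serfaty (after checking that the Ullman density is $\tfrac{1}{2}$-H\"older and has square-root edges, which is exactly where the threshold $p\ge\tfrac{3}{2}$ enters), and the remaining pieces $\ln c_{n,\beta}$ and $\ln\Gamma(1+d_n/p)$ are handled by Stirling and Barnes-$G$ asymptotics. For $\beta=2$ the paper likewise bypasses Lebl\'e--Serfaty via the orthogonal-polynomial/Riemann--Hilbert expansion of \cite{CKM23}, precisely as you anticipate.
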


In case of $p=2$ it is known that $\Ent(\mu_2)=\ln \pi -\frac{1}{2}$ where $\mu_2$ is the semi-circle law. Otherwise the entropy $\Ent(\mu_p)$ of the Ullman distribution appears to be unknown. Independently, the expansion in \eqref{eq:main-1} was recently obtained by Dworaczek Guera, Memin and Pain \cite{GMP25} in case of $p\ge 2$. 

In addition to Theorem~\ref{thm:main} we obtain for $p=\infty$ from the exact formula \eqref{eq:inf-sa-exact} and asymptotics for the Barnes $G$-function satisfying $G(n+1)=\prod_{k=0}^{n-1}\Gamma(1+k)$, $n\in \IN$, that
\begin{align}\label{eq:inf-sa-asymp}
\ln \vol(\IB_{\infty,2}^n)
=&-\frac{1}{2}n^2\ln n 
+\Big(\frac{1}{2}\ln 2\pi +\frac{3}{4}+\ln A(\infty)\Big)n^2\notag\\
&-\frac{1}{6}\ln n +\frac{1}{12}\ln 2+2\zeta'(-1)+o(1),
\end{align}
where $\zeta$ is the Riemann zeta function and $\zeta'(-1)=\frac{1}{12}-\ln A$ with $A=1.282...$ being the Glaisher-Kinkelin constant. The logarithmic term is what we would expect from 
\[
\ln \vol(\IB_{p,\beta}^n)
\le \ln \vol(\IB_{\infty,\beta}^n)
\le \ln \vol(\IB_{p,\beta}^n)+\frac{d_n}{p}\ln n,
\]
upon setting $p=n^2(\ln n)^{2}$ such that the difference between the logarithmic volumes vanishes. The discrepancy in the constant terms in \eqref{eq:main-2} and \eqref{eq:inf-sa-asymp} is because the $o(1)$-term in \eqref{eq:main-2} implicitly depends on $p$.

The proof of Theorem~\ref{thm:main} relies on the fact that the volume of $\bpb$ can be expressed in terms of the integral
\begin{equation} \label{eq:Z}
Z_{n,p,\beta}
=\int_{\IR^n}\prod_{1\le i<j\le n}|x_i-x_j|^{\beta} \prod_{i=1}^n e^{-\frac{\beta}{2}n\, v_p|x_i|^p}\dd x,
\end{equation}
where
\[
v_p =\frac{\sqrt{\pi}\Gamma(\frac{p}{2})}{\Gamma(\frac{p+1}{2})}=\frac{1}{p\alpha_p}
\]
is chosen for normalization purposes. The quantity $Z_{n,p,\beta}$ is the partition function of a one-dimensional $\beta$-ensemble with potential $V(x)=v_p |x|^p$, where eigenvalues/particles are distributed with joint density 
\begin{equation} \label{eq:density}
f_{n,p,\beta}(x)
=\frac{1}{Z_{n,p,\beta}}\prod_{1\le i<j\le n}|x_i-x_j|^{\beta} \prod_{i=1}^n e^{-\frac{\beta}{2}n\, v_p|x_i|^p}, \qquad x\in \IR^n.
\end{equation}
The parameter $\beta$ stands for the inverse temperature and may take any positive value, while for the matrix representation we restrict to $\beta\in \{1,2,4\}$. In case of $p=2$ the integral $Z_{n,2,\beta}$ is known exactly via the Mehta-Selberg integral. We refer to \cite{AGZ10} and \cite{For10} for more information on $\beta$-ensembles. 

In this context the Ullman distribution arises as the corresponding equilibrium measure minimizing the functional
\[
\mu\mapsto I_p(\mu)
=-\int_{\IR}\int_{\IR} \ln|x-y|\dd\mu(x)\dd\mu(y)+v_p\int_{\IR} |x|^p\dd\mu(x)
\]
over $\mu\in \cP(\IR)$, where $\cP(\IR)$ denotes the space of probability measures on $\IR$.  More precisely, if $X=(X_1,\dots,X_n)$ is distributed according to the density $f_{n,p,\beta}$, then the random empirical measure $ \mu_{n,p} =\frac{1}{n}\sum_{i=1}^{n}\delta_{X_i} $  tends to $\mu_p$ as $n\to\infty$. Moreover, large deviation principles due to \cite{BG97} and \cite{HP00} yield 
\begin{equation} \label{eq:Z-first}
\lim_{n\to\infty}\frac{2}{\beta n^2}\ln Z_{n,p,\beta}
= I_p(\mu_p)
=\ln 2+\frac{3}{2p}.
\end{equation}
Earlier work \cite{DPS95,Joh98} also identified the limiting constant but with different techniques. In \cite{DFG+23}, the limit \eqref{eq:Z-first} is used to prove \eqref{eq:sim-sym}. The large deviation principle for the corresponding tagged empirical fields due to Lebl\'e and Serfaty \cite{LS17} allows to refine \eqref{eq:Z-first} as follows. 

\begin{theorem} \label{thm:LS}
Let $\beta>0$ and $p>1$. As $n\to\infty$,
\[
\ln Z_{n,p,\beta}
=-\frac{\beta}{2}n^2 I_p(\mu_p)+\frac{\beta}{2}n\ln n- C(\beta)n+\Big(1-\frac{\beta}{2}\Big)\Ent(\mu_p)n +o(n),
\]
where 
\begin{equation} \label{eq:c-beta}
C(\beta)
=\Big(1-\frac{\beta}{2}\Big)\ln \frac{\beta}{2}-\frac{\beta}{2}\ln 2\pi+\frac{\beta}{2}+\ln \Gamma\Big(\frac{\beta}{2}\Big).
\end{equation}
\end{theorem}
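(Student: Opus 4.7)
My plan is to apply directly the next-to-leading order asymptotic expansion for $\beta$-ensemble partition functions due to Lebl\'e and Serfaty \cite{LS17} to the quantity $Z_{n,p,\beta}$, which by \eqref{eq:Z} is precisely the partition function of a one-dimensional log-gas with confining potential $V(x)=v_p|x|^p$ at inverse temperature $\beta$. Under appropriate regularity of $V$ and a single-cut, off-critical equilibrium measure, their main result yields, in our normalization, an expansion of the form
\[
\ln Z_{n,p,\beta} = -\frac{\beta}{2}n^2 I_p(\mu_p) + \frac{\beta}{2}n\ln n - \widetilde{C}(\beta)\,n + \Big(1-\frac{\beta}{2}\Big)\Ent(\mu_p)\,n + o(n),
\]
where $\widetilde{C}(\beta)$ is a universal constant (depending only on $\beta$) obtained as the minimum of a renormalized free energy functional over stationary point processes on $\IR$. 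It then remains to (i) verify the Lebl\'e-Serfaty hypotheses for $V = v_p|\cdot|^p$ with $p\ge\tfrac{3}{2}$ and (ii) identify $\widetilde{C}(\beta)$ with $C(\beta)$ from \eqref{eq:c-beta}.

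\textbf{Verification of the hypotheses and the main obstacle.} The potential $V$ is smooth away from the origin, strictly convex on each half-line for $p > 1$, and grows faster than $\ln|x|$ at infinity, so the equilibrium problem is well-posed and the minimiser is the Ullman distribution $\mu_p$ given in \eqref{eq:ullman-1}. From the explicit formula one sees that $\mu_p$ is supported on a single symmetric interval $[-c_p,c_p]$, has a continuous density bounded above and below on compact subsets of the open support and vanishing like a square root at the endpoints, placing us in the one-cut, off-critical regime of \cite{LS17}. For $p\ge 2$ the potential $V$ belongs to $C^2(\IR)$ and all regularity assumptions are met directly. The delicate range is $p\in[\tfrac{3}{2},2)$, where $V\in C^{1,p-1}(\IR)$ is only H\"older of exponent $p-1\in[\tfrac{1}{2},1)$ at the origin, since $V'(x)=v_p\,p\,\sgn(x)|x|^{p-1}$ is not differentiable there. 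The threshold $p\ge\tfrac{3}{2}$ corresponds precisely to the amount of smoothness of $V'$ needed for the quantitative local laws, rigidity estimates and tagged-empirical-field arguments underpinning \cite{LS17} to apply. I expect this to be the main obstacle: one must either inspect the assumptions in \cite{LS17} directly and confirm that $C^{1,\alpha}$ potentials with $\alpha\ge\tfrac{1}{2}$ are admissible, or regularise $V$ by a mollified potential $V^{(\varepsilon)}\in C^{\infty}(\IR)$ coinciding with $V$ outside $[-\varepsilon,\varepsilon]$, and control the perturbation of $\ln Z_{n,p,\beta}$, $I_p(\mu_p)$ and $\Ent(\mu_p)$ uniformly in $n$ before sending $\varepsilon\to 0$.

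\textbf{Identification of the constant.} Once the abstract expansion is in place with some universal $\widetilde{C}(\beta)$, the explicit form \eqref{eq:c-beta} is pinned down by specialising to a case where $Z_{n,p,\beta}$ is computable exactly, namely the Gaussian $\beta$-ensemble obtained at $p=2$. There $v_2 = 2$, so $V_2(x)=2x^2$ and the Boltzmann weight in \eqref{eq:Z} reduces to $e^{-\beta n\sum x_i^2}$; the Mehta-Selberg integral evaluates $Z_{n,2,\beta}$ as a finite product of Gamma functions, and Stirling's formula yields a complete asymptotic expansion of $\ln Z_{n,2,\beta}$ to order $o(n)$. Comparing this expansion with the abstract one, and using the known values $I_2(\mu_2)=\ln 2+\tfrac{3}{4}$ and $\Ent(\mu_2)=\ln\pi-\tfrac{1}{2}$ for the (rescaled) semicircle law, isolates $\widetilde{C}(\beta)$ and identifies it with $C(\beta)$ as given in \eqref{eq:c-beta}. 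Since $\widetilde{C}(\beta)$ depends only on $\beta$ and not on $V$, this identification transfers to all $p\ge\tfrac{3}{2}$, completing the proof.
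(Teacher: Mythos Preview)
Your overall strategy --- apply the Lebl\'e--Serfaty expansion from \cite{LS17} and then identify the universal constant $\widetilde{C}(\beta)$ by specialising to the Gaussian case $p=2$ via the Mehta--Selberg integral --- is exactly what the paper does, and your treatment of the constant is correct.

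Where you go astray is in locating the obstacle. The hypotheses (H1), (H2), (H5) of \cite{LS17} impose only mild conditions on $V$ itself (lower semi-continuity, polynomial growth, an integrability bound), all of which hold for $V(x)=v_p|x|^p$ for every $p\ge 1$; the lack of a second derivative of $V$ at the origin is irrelevant. The nontrivial hypotheses are (H3) and (H4), which concern the \emph{equilibrium density $f_p$}: it must be H\"older continuous of order $\tfrac{1}{2}$ on its support and vanish like a square root at the endpoints. You assert these properties in passing (``from the explicit formula one sees that\dots'') and then declare the main obstacle to be the regularity of $V$; the paper does the opposite. Its Lemma~\ref{lem:hoelder} computes directly from the integral representation \eqref{eq:ullman-1} that $f_p$ is $C^1$ on $(-1,1)\setminus\{0\}$, behaves like $c\sqrt{1-|x|}$ near $\pm 1$, and is H\"older of order exactly $p-1$ at the origin for $1<p<2$. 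The threshold $p\ge\tfrac{3}{2}$ is therefore the condition for $f_p$ --- not $V$ --- to be globally H\"older-$\tfrac{1}{2}$. Your proposed mollification of $V$ near zero would not address (H3): smoothing the potential does not automatically give control of the H\"older exponent of the resulting equilibrium density, and in any case no such workaround is needed once Lemma~\ref{lem:hoelder} is in hand.
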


Theorem~\ref{thm:LS} forms the core of the proof of \eqref{eq:main-1} in Theorem~\ref{thm:main}. Independently, in the recent work \cite{GMP25} the authors deduced the statement of Theorem~\ref{thm:LS} in case of $p\ge 2$ from a central limit theorem for linear eigenvalue statistics. The restriction to $p\ge 2$ results from regularity requirements of the underlying approach, see also \cite{DFG+23}. The milder restriction to $p>1$ in Theorem~\ref{thm:LS} is due to the fact that in this case the Ullman density $f_p$ is Hölder continuous inside its support which is already sufficient for the approach in \cite{LS17}.

The term $\Ent(\mu_p)$ in Theorem~\ref{thm:LS} arises via a rescaling property of the specific entropy which is one part of the rate function in the large deviation principle in \cite{LS17}. In case of $\beta=2$, the contribution of $\Ent(\mu_p)$ vanishes and more precise asymptotics hold. The following is a consequence of \cite[Prop.~1.1]{CKM23}, derived from \cite{KM99}, and \cite[Thm.~1.9]{CKM23}.

\begin{theorem} \label{thm:CKM}
Let $p\ge 1$. As $n\to\infty$,
\[
\ln Z_{n,p,2}
=-n^2 I_p(\mu_p)+n\ln n+(\ln 2\pi -1) n +\frac{5}{12}\ln n -\frac{1}{12}\ln \frac{p}{2}+\zeta'(-1)+o(1),
\]
where $\zeta$ is the Riemann zeta function. 
\end{theorem}

Theorem~\ref{thm:CKM} combined with Lemma~\ref{lem:vol-partition} and Lemma~\ref{lem:cn} below yields \eqref{eq:main-2} in Theorem~\ref{thm:main}. In case of $p$ an even integer, the potential $V(x)=v_p |x|^p$ is an analytic function and \cite[Prop.~1.2]{BG13} yields a full asymptotic expansion of $\ln Z_{n,p,\beta}$, and consequently of $\ln \vol(\bpb)$, with coefficients defined in terms of integrals of the Stieltjes transform of $\mu_p$, see also \cite[Sec.~7]{BG24} and \cite{Shc13,Shc14}. 

In the case of non self-adjoint matrices, for $\beta\in\{1,2,4\}$ and $1\le p<\infty$, the volume of the unit ball $B_{p,\beta}^n$ may be written in terms of 
\[
\tilde{Z}_{n,p,\beta}
=\int_{(\IR_+)^n}\prod_{1\le i<j\le n}|x_i-x_j|^{\beta}\prod_{i=1}^n x_i^{\beta/2-1}\prod_{i=1}^n e^{-\beta n\, v_px_i^{p/2}}\dd x,
\]
see e.g.~\cite[Sec.~4.2]{DFG+23}. This corresponds to the partition function of a Laguerre $\beta$-ensemble with potential $V(x)=v_p|x|^{p/2}$. The leading order asymptotics for $\tilde{Z}_{n,p,\beta}$ can again be obtained via large deviations, leading to \eqref{eq:sim-asym}. However, the results in \cite{LS17} do not apply because of a blow-up of the corresponding equilibrium density near the boundary of its support.  It goes beyond the scope of the present work to derive next-order asymptotics for $\tilde{Z}_{n,p,\beta}$ and $B_{p,\beta}^n$. For completeness, we note that in case of $p=\infty$ it can be derived that (see e.g. \cite{JKP24})
\begin{align*}
\ln \vol(B_{\infty,\beta}^n)
=&-\frac{\beta}{2}n^2\ln n +\beta\Big(\frac{1}{2}\ln\frac{2\pi}{\beta}+\frac{3}{4}+\ln A(\infty)\Big)n^2\\
&+\Big(1-\frac{\beta}{2}\Big)\ln(A(\infty))n
+\frac{-3+\frac{\beta}{2}+\frac{2}{\beta}}{12}\ln n
+O(1).
\end{align*}

The unit balls $B_{p,\beta}^n$ and $\bpb$ can even be studied for $0<p<1$. Then \eqref{eq:sim-asym} and \eqref{eq:sim-sym} as well as Theorem~\ref{thm:CKM} (with a possibly modified constant term) continue to hold. However, the proof of Theorem~\ref{thm:main} and the underlying large deviation principle in \cite{LS17} appear to be limited to $p\ge 1$. This is because the rate function in \cite{LS17} is the free energy which is uniquely minimized by the sine$_{\beta}$-process, see \cite{EHL21,HM25}. While the microscopic limit point process in the bulk of many $\beta$-ensembles is given by the universal sine$_{\beta}$-process, see \cite{BEY14,Meh91,VV09}, for $\beta=2$ and $p<1$ the density $f_p$ develops a singularity at zero near which the limit point process depends on $p$, see \cite{CKM23}. In case of $\beta\in \{1,4\}$, it is an open problem to extend Theorem~\ref{thm:main} to $p=1$ for which
\[
f_1(x)
=\frac{1}{\pi}\ln \frac{1+\sqrt{1-x^2}}{|x|},\qquad x\in [-1,1].
\]
The logarithmic singularity at the origin prevents direct application of the results in \cite{LS17} but possibly behaves well enough such that the approach may be adapted to this case. 

\begin{remark}
	Heuristically, the unit balls $B_{p,\beta}^n$ and $\IB_{p,\beta}^n$ are non-commutative versions of $\IB_p^n=\{x\in \IR^n\colon \sum_{i=1}^{n}|x_i|^p\le 1\}$. To compare, note that
\[
\vol(n^{1/p}\IB_{p}^n)
=n^{n/p}\frac{2^n\Gamma(1+\frac{1}{p})^n}{\Gamma(1+\frac{n}{p})}
=\frac{1+o(1)}{\sqrt{2\pi n/p}} e^{\Ent(\nu_p)n},
\]
where $\nu_p$ has density proportional to $e^{-|x|^p/p}$ and maximizes entropy over $\mu\in \cP(\IR)$ with $\int |x|^p\dd\mu(x) \le 1$, see also \cite{KP21}. Similarly, the Ullman distribution $\mu_p$ maximizes free entropy over $\mu\in \cP(\IR)$ with $\int |x|^p\dd\mu(x)\le \alpha_p$, see \cite[Prop.~5.34]{HP00}. In case of $p=\infty$, one can interpret $\nu_{\infty}$ as the uniform distribution on $[-1,1]$, which maximizes entropy, and $\mu_{\infty}$ as the arcsine distribution, which maximizes free entropy. Formally, see Lemma~\ref{lem:vol-partition} below, the case $\beta=0$ corresponds to the commutative case where a Poisson point process appears in the microscopic limit, see also \cite{AD14,BP15,Leb16}. Possibly, a careful analysis of the behavior of the asymptotics for $\ln Z_{n,p,\beta}$ as $\beta\to 0$ could allow a computation of $\Ent(\mu_p)$ in terms of $\Ent(\nu_p)$.
\end{remark}

\begin{remark}
Theorem~\ref{thm:main} can be combined with the stochastic representation in \cite[Cor.~4.3]{KPT20b} of a random matrix drawn uniformly from a Schatten ball $\bpb$ to obtain information on critical intersections of Schatten $p$-balls in the spirit of \cite{KPT20b} and \cite{ST25}. 
\end{remark}

\section{Proofs}

In the following, we provide the proofs of Theorems~\ref{thm:main} and \ref{thm:LS}. First, in Lemma~\ref{lem:hoelder} we prove Hölder regularity of the Ullman density. Then Theorem~\ref{thm:LS} is a direct consequence of \cite{LS17}. Subsequently, in Lemma~\ref{lem:vol-partition} we recall a Weyl type integration formula providing the link between $\vol(\bpb)$ and $Z_{n,p,\beta}$. Finally, we deduce Theorem~\ref{thm:main} from Theorem~\ref{thm:LS} with the help of a known asymptotic expansion in Lemma~\ref{lem:cn}. 

Corollary 1.1 in \cite{LS17} states that the asymptotics in Theorem~\ref{thm:LS} hold, i.e.
\begin{equation} \label{eq:Z-LS}
\ln Z_{n,p,\beta}
=-\frac{\beta}{2}n^2 I_p(\mu_p)+\frac{\beta}{2}n\ln n-  C(\beta)n+\Big(1-\frac{\beta}{2}\Big)\Ent(\mu_p)n +o(n),
\end{equation}
provided that the function $V(x)=v_p |x|^p$ satisfies certain assumptions (H1)-(H5). These involve regularity of the density of the equilibrium measure which in this case is the Ullman density
\[
f_p(x)
= \frac{p}{\pi}\int_{|x|}^1 \frac{t^{p-1}}{\sqrt{t^2-x^2}}\dd t,\qquad x\in [-1,1].
\]
It is known from \cite[Thm.~1.34]{DKM98} that if $V\in C^3$, that is if $p\ge 3$, then the density of the equilibrium measure is Hölder continuous of order $\frac{1}{2}$ and behaves like $\sqrt{1-|x|}$ near the boundary of its support. To include smaller values of $p$, we verify the following statement directly. 

\begin{lemma} \label{lem:hoelder}
	Let $p>1$. Then $f_p$ is Hölder continuous of order $\min\{p-1,\frac{1}{2}\}$ on $[-1,1]$. Moreover, it is Hölder continuous of order $\frac{1}{2}$ in neighborhoods of $-1$ and $1$ and there exist positive constants $c$ and $C$ such that 
\begin{equation} \label{eq:tail}
	c\sqrt{1-|x|}\le f_p(x) \le C\sqrt{1-|x|}\qquad \text{for}\qquad |x|\ge\frac{1}{2}.
\end{equation}
\end{lemma}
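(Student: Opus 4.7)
The plan is to recast $f_p$ in a form better adapted to both claims by the substitution $t=\sqrt{x^2+\rho^2}$ in the defining integral, which transforms the Ullman formula into
\[
f_p(x) = \frac{p}{\pi}\int_0^{\sqrt{1-x^2}}(x^2+\rho^2)^{(p-2)/2}\,\dd\rho.
\]
From this the tail estimate \eqref{eq:tail} is immediate: for $|x|\in[\tfrac12,1]$ and $\rho\in[0,\sqrt{1-x^2}]$ one has $x^2+\rho^2\in[\tfrac14,1]$, so $(x^2+\rho^2)^{(p-2)/2}$ is pinched between two positive $p$-dependent constants, and integrating in $\rho$ yields $f_p(x)\asymp\sqrt{1-x^2}\asymp\sqrt{1-|x|}$.

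For the Hölder regularity I would restrict to $[0,1]$ by evenness and set $F(x):=\int_0^{\sqrt{1-x^2}}(x^2+\rho^2)^{(p-2)/2}\,\dd\rho$. A Leibniz differentiation, using that the endpoint value $(x^2+(1-x^2))^{(p-2)/2}$ equals $1$, produces
\[
F'(x) = -\frac{x}{\sqrt{1-x^2}} + (p-2)x\int_0^{\sqrt{1-x^2}}(x^2+\rho^2)^{(p-4)/2}\,\dd\rho.
\]
The key step is to rescale the interior integral via $\rho=xs$, which gives $x^{p-3}\int_0^{\sqrt{1-x^2}/x}(1+s^2)^{(p-4)/2}\,\dd s$. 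For $p<3$ one has $p-4<-1$, so the integrand is integrable on $(0,\infty)$ and yields the uniform bound $|F'(x)|\le x/\sqrt{1-x^2}+C_p\, x^{p-2}$. The complementary range $p\ge 3$ is covered by \cite[Thm.~1.34]{DKM98}, or can be handled analogously using the pointwise bounds $(x^2+\rho^2)^{(p-4)/2}\le\rho^{p-4}$ for $p<4$ and $\le 1$ for $p\ge 4$.

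The Hölder estimate then follows by direct integration: for $0\le x\le y\le 1$,
\[
|F(y)-F(x)| \le \big(\sqrt{1-x^2}-\sqrt{1-y^2}\big) + \frac{C_p}{p-1}\big(y^{p-1}-x^{p-1}\big),
\]
and the elementary inequalities $|\sqrt a-\sqrt b|\le\sqrt{|a-b|}$ and $|a^\alpha-b^\alpha|\le|a-b|^\alpha$ (valid for $\alpha\in(0,1]$, $a,b\ge 0$) bound the first difference by $\sqrt{2}\,|y-x|^{1/2}$. The hypothesis $p\ge\tfrac32$ is sharp precisely in the second term: it forces $p-1\ge\tfrac12$, so that either the subadditivity inequality (if $p\in[\tfrac32,2]$) or a Lipschitz bound (if $p>2$) yields $|y^{p-1}-x^{p-1}|\le C_p'\,|y-x|^{1/2}$ on $[0,1]$. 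The main obstacle throughout is controlling the interior integral in $F'$ uniformly as $x\to 0$, where $(x^2+\rho^2)^{(p-4)/2}$ becomes singular for $p<4$; the rescaling trick combined with the critical integrability threshold $p<3$ converts this singularity into the factor $x^{p-2}$, whose antiderivative $x^{p-1}$ is exactly what pins the attainable Hölder exponent to $p-1$.
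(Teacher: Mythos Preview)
Your argument is correct and takes a genuinely different route from the paper. The paper uses the trigonometric substitution $t=|x|/\cos\alpha$, which produces
\[
f_p(x)=\frac{p}{\pi}\,|x|^{p-1}\int_0^{\arccos|x|}\frac{\dd\alpha}{\cos^p\alpha},
\]
reads off the tail behaviour from $\arccos|x|\sim\sqrt{2(1-|x|)}$, computes the explicit closed-form derivative $f_p'(x)=\frac{p-1}{x}f_p(x)-\frac{p}{\pi x\sqrt{1-x^2}}$ on $(0,1)$, and then handles the point $x=0$ by a separate direct estimate of $f_p(h)-f_p(0)$, obtaining Hölder exponent $p-1$ there. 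Your substitution $t=\sqrt{x^2+\rho^2}$ instead yields $f_p(x)=\frac{p}{\pi}\int_0^{\sqrt{1-x^2}}(x^2+\rho^2)^{(p-2)/2}\dd\rho$, from which both the tail bound and the Hölder estimate flow from a single pointwise derivative bound $|F'(x)|\lesssim x/\sqrt{1-x^2}+x^{p-2}$ integrated over $[x,y]$. The advantage of your approach is that it avoids splitting off $x=0$ as a special case: the singularity at the origin is absorbed into the integrable factor $x^{p-2}$, whose antiderivative $x^{p-1}$ directly exhibits the critical exponent. The paper's route, on the other hand, gives an explicit functional relation for $f_p'$ and makes the role of the $\arccos$ asymptotics at the endpoints transparent. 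One small point: your ``analogous'' alternative for $p\ge 3$ via $(x^2+\rho^2)^{(p-4)/2}\le\rho^{p-4}$ breaks down exactly at $p=3$ (the integral of $\rho^{-1}$ diverges), so there you do need the citation of \cite{DKM98}; alternatively your rescaling still applies at $p=3$ since $x\cdot\operatorname{arcsinh}(\sqrt{1-x^2}/x)$ remains bounded.
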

\begin{proof}
First, we verify the behavior of $f_p$ near the boundary points. By a change of variables, for $x\in (-1,1)\setminus\{0\}$, 
\begin{equation} \label{eq:change}
\frac{\pi}{p}f_p(x)
=\int_{|x|}^1 \frac{t^{p-1}}{\sqrt{t^2-x^2}}\dd t
=|x|^{p-1}\int_0^{\arccos |x|}\frac{1}{\cos^p \alpha}\dd \alpha.
\end{equation}
Combining \eqref{eq:change} with the asymptotics $\arccos |x|=\sqrt{2(1-|x|)}+o(1-|x|)$ as $|x|\to 1$ proves \eqref{eq:tail} which in turn implies that $f_p$ is Hölder continuous of order $\frac{1}{2}$ at the boundary points $\pm 1$.

Next, we show that $f_p$ is in fact continuously differentiable at $x\in (-1,1)\setminus \{0\}$. For symmetry reasons it is enough to consider $x\in (0,1)$. Let $0<|h|<\min\{x,1-x\}$ and write 
\begin{align*}
\frac{\pi}{p}\big(f_p(x+h)-f_p(x)\big)	
=&\big((x+h)^{p-1}-x^{p-1}\big)\int_0^{\arccos x}\frac{1}{\cos^p\alpha}\dd\alpha\\
&-(x+h)^{p-1}\int_{\arccos(x+h)}^{\arccos x}\frac{1}{\cos^p \alpha}\dd \alpha.
\end{align*}
Using $\arccos(x+h)=\arccos x -\frac{h}{\sqrt{1-x^2}}+o(h)$ and letting $h\to 0$ shows that $f_p$ is continuously differentiable at $x$ with derivative
\[
f_p'(x)
=\frac{p-1}{x}f_p(x)-\frac{p}{\pi}\frac{1}{x\sqrt{1-x^2}}.
\]

It remains to check Hölder continuity at zero. Let $h\in (0,1)$ and write
\begin{align*}
\frac{\pi}{p}\big(f_p(h)-f_p(0)\big)
&=\int_h^1 \frac{t^{p-1}}{\sqrt{t^2-h^2}}\dd t - \int_0^1 t^{p-2}\dd t\\
&=\int_h^1 t^{p-1}\Big(\frac{1}{\sqrt{t^2-h^2}}-\frac{1}{t}\Big)\dd t -\frac{h^{p-1}}{p-1}.
\end{align*}
The latter integral can be estimated by
\begin{align*}
\int_h^1 t^{p-1}\Big(\frac{1}{\sqrt{t^2-h^2}}-\frac{1}{t}\Big)\dd t
&\le h^2\int_h^1 \frac{t^{p-3}}{\sqrt{t^2-h^2}}\dd t\\
&= h^{p-1}\int_1^{1/h} \frac{u^{p-3}}{\sqrt{u^2-1}}\dd u.
\end{align*}
In particular, if $1< p<2$, then
\begin{equation} \label{eq:hoelder}
0\le \frac{\pi}{p}\frac{f_p(h)-f_p(0)}{h^{p-1}}
\le \int_1^{\infty} \frac{u^{p-3}}{\sqrt{u^2-1}}\dd u
<\infty.
\end{equation}
Thus, in this case we obtain that $f_p$ is Hölder continuous of order $p-1$ at $0$. For $p\ge 2$, a similar argument shows that $f_p$ is continuously differentiable at $0$ with $f_p'(0)=0$. In conclusion, if $p>1$, the function $f_p$ is Hölder continuous of order $\min\{p-1,\frac{1}{2}\}$ on $[-1,1]$. This completes the proof.
\end{proof}

We can now deduce Theorem~\ref{thm:LS}.

\begin{proof}[Proof of Theorem~\ref{thm:LS}]
	Let $p>1$. In order to apply \cite[Cor.~1.1]{LS17} to $V(x)=v_p|x|^p$, we verify assumptions (H1)-(H5) there. The function $V$ is lower semi-continuous, bounded from below, everywhere finite and grows polynomially as $|x|\to \infty$. Thus, assumptions (H1) and (H2) in \cite{LS17} are satisfied. By Lemma~\ref{lem:hoelder} the density $f_p$ of the equilibrium measure $\mu_p$ is Hölder continuous of order $\min\{p-1,\frac{1}{2}\}$ on its support $\Sigma=[-1,1]$ (and Hölder continuous of order $\frac{1}{2}$ away from zero). Thus, assumption (H3) is satisfied. For assumption (H4) we note that $\Sigma$ is connected and its boundary is given by $\{-1,1\}$. Further, by Lemma~\ref{lem:hoelder}, the density $f_p$ behaves like $c\sqrt{1-|x|}$ in neighborhoods of the boundary points and is Hölder continuous of order $\frac{1}{2}$ there. Thus, assumption (H4) is satisfied. Assumption (H5) is satisfied since $x\mapsto \exp(-c(|x|^p-\ln |x|))$ is integrable on $\IR$ for every $c>0$. Thus, we can apply \cite[Cor.~1.1]{LS17} to obtain \eqref{eq:Z-LS}.  Finally, the constant $C(\beta)$ can be obtained from the known full expansion of the partition function in the Gaussian case $p=2$, see e.g. \cite[Sec.~7]{BG24}. 
\end{proof}

For the proof of Theorem~\ref{thm:main} we provide the following well-known link between Schatten $p$-balls and $\beta$-ensembles. It is a combination of a Weyl type integration formula, see \cite{AGZ10,SR84}, with a classical trick in convexity going back to \cite{Dir39} for computing the volume of the unit ball of $\ell_p^n$, see \cite{DFG+23,GP07,KMP98}. Recall that $d_n=\frac{\beta}{2}n^2+(1-\frac{\beta}{2})n$ and $\IB_p^n=\{x\in \IR^n\colon \sum_{i=1}^{n}|x_i|^p\le 1\}$.

\begin{lemma} \label{lem:vol-partition}
Let $\beta\in\{1,2,4\}$ and $1\le p<\infty$. Then
\begin{align*}
\vol(\bpb)
&=c_n\int_{\IB_p^n} \prod_{1\le i< j\le n}|x_i-x_j|^{\beta}\dd x\\
&=\frac{c_{n}}{\Gamma(1+\frac{d_n}{p})}\Big(\frac{n\beta v_p}{2}\Big)^{d_n/p}Z_{n,p,\beta}
\end{align*}
where $Z_{n,p,\beta}$ and $v_p$ are as in \eqref{eq:Z} and
\[
c_{n}
=\frac{1}{n!}\bigg(\frac{\Gamma(\frac{\beta}{2})}{(2\pi)^{\beta/2}}\bigg)^{n}\prod_{k=1}^n\frac{(2\pi)^{\beta k/2}}{\Gamma(\frac{\beta k}{2})}.
\]
\end{lemma}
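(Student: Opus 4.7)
The plan is to establish the two displayed equalities in sequence, the first by a Weyl-type integration formula and the second by a Dirichlet-type homogeneity trick, and then to combine them.

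For the first equality, I would diagonalize $A = U\Lambda U^{*}$ with $U$ in $O(n)$, $U(n)$ or the compact symplectic group $\mathrm{Sp}(n)$ according to $\beta\in\{1,2,4\}$, and $\Lambda = \mathrm{diag}(\lambda_1,\dots,\lambda_n)$ real. The standard Weyl-type integration formula on $\cH_n(\IF_\beta)\cong \IR^{d_n}$, see \cite{AGZ10, SR84}, factors Lebesgue measure as Haar measure on the quotient of the compact group by its maximal torus, times the Vandermonde density $\prod_{i<j}|\lambda_i - \lambda_j|^\beta$ on $\IR^n$. Since $\|A\|_p$ depends only on the spectrum and equals the $\ell_p$-norm of $(\lambda_1,\dots,\lambda_n)$, the indicator of $\bpb$ pushes forward to the indicator of $\IB_p^n$ on the eigenvalue side. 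Integrating out the group variable, and dividing by $n!$ to pass from the Weyl chamber to all of $\IR^n$, collects precisely the constant $c_n$ stated in the lemma.

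For the second equality, I would set $g(x) = \prod_{1\le i<j\le n}|x_i - x_j|^\beta$ and note that $g$ is homogeneous of degree $\beta n(n-1)/2$, so $g(x)\,\dd x$ scales as $R^{d_n}$ under the dilation $x\mapsto Rx$ (recall $d_n = n + \beta n(n-1)/2$). With $c = \frac{n\beta v_p}{2}$, applying Fubini and the substitution $t = cR^p$ in polar-type coordinates adapted to the $\ell_p$-norm yields
\begin{equation*}
Z_{n,p,\beta} = \int_{\IR^n} g(x) e^{-c\|x\|_p^p}\,\dd x = \frac{\Gamma(1+d_n/p)}{c^{d_n/p}}\int_{\IB_p^n} g(y)\,\dd y,
\end{equation*}
using $\int_0^\infty t^{s-1} e^{-t}\,\dd t = \Gamma(s)$ with $s = d_n/p$. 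Solving for $\int_{\IB_p^n} g(y)\,\dd y$ and feeding the result into the first equality gives the second.

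The main obstacle is pinning down the precise value of $c_n$ in the Weyl step, which requires the Haar volumes of $O(n)$, $U(n)$, $\mathrm{Sp}(n)$ and their maximal tori, together with the combinatorial factor $1/n!$ from the Weyl group action; I would not recompute these but cite the classical references. The Dirichlet trick in the second step is routine once the homogeneity of the Vandermonde is observed, so the lemma reduces to assembling two well-known pieces with matching constants.
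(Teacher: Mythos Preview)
Your proposal is correct and follows exactly the approach the paper indicates: the paper does not give a detailed proof of this lemma but simply states that it is ``a combination of a Weyl type integration formula, see \cite{AGZ10,SR84}, with a classical trick in convexity going back to \cite{Dir39},'' which is precisely your two-step argument (spectral decomposition plus Vandermonde Jacobian for the first equality, homogeneity of the Vandermonde and the Gamma-integral substitution for the second). Your caveat about not recomputing $c_n$ but citing the classical references is also exactly what the paper does.
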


\begin{remark}\label{rem:infty-non-sa}
For $p=\infty$, the statement of Lemma~\ref{lem:vol-partition} becomes
\[
\vol(\IB_{\infty,\beta}^n)
=c_n\int_{[-1,1]^n} \prod_{1\le i< j\le n}|x_i-x_j|^{\beta}\dd x.
\]
Then \eqref{eq:inf-sa-exact} is a consequence of Selberg's integral formula.
\end{remark}

As a final ingredient, we shall need the following well-known asymptotics for $c_{n}$ as in Lemma~\ref{lem:vol-partition}. 

\begin{lemma} \label{lem:cn}
Let $\beta\in \{1,2,4\}$. Then, as $n\to\infty$,
\begin{align*}
\ln c_{n}
=&-\frac{\beta}{4}n^2\ln n + \frac{\beta}{2}\Big(\frac{1}{2}\ln \frac{4\pi}{\beta}+\frac{3}{4}\Big)n^2-\frac{1}{2}\Big(1+\frac{\beta}{2}\Big)n\ln n\\
&+\Big(\frac{1}{2}\Big(1-\frac{\beta}{2}\Big)\ln \beta -\frac{1}{2}\Big(1+\frac{\beta}{2}\Big)(\ln \pi-1)-\ln 2+\ln \Gamma\Big(\frac{\beta}{2}\Big)\Big)n\\
&-\frac{3+\frac{\beta}{2}+\frac{2}{\beta}}{12}\ln n
+a_{\beta}+o(1),
\end{align*}
where $a_{\beta}\in \IR$ is some constant with $a_2=\frac{1}{2}\ln 2\pi-\zeta'(-1)$ with $\zeta$ being the Riemann zeta function.
\end{lemma}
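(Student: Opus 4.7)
The approach is elementary asymptotic analysis: Stirling's formula combined with the Glaisher--Kinkelin refinement of $\sum_{k=1}^n k\ln k$. Taking logarithms and using $\sum_{k=1}^n k = n(n+1)/2$ to collect the powers of $(2\pi)$, I first rewrite
\[
\ln c_n = -\ln n! + n\ln\Gamma\Big(\frac{\beta}{2}\Big) + \frac{\beta}{4}n(n-1)\ln(2\pi) - \sum_{k=1}^n \ln\Gamma\Big(\frac{\beta k}{2}\Big).
\]
Stirling handles $\ln n!$ directly; the heart of the proof is the asymptotic expansion of $S_n(\beta):=\sum_{k=1}^n \ln\Gamma(\beta k/2)$ to order $o(1)$.

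To expand $S_n(\beta)$ I insert the refined Stirling expansion
\[
\ln\Gamma\Big(\frac{\beta k}{2}\Big) = \frac{\beta k}{2}\ln\frac{\beta k}{2} - \frac{\beta k}{2} - \frac{1}{2}\ln\frac{\beta k}{2} + \frac{1}{2}\ln(2\pi) + \frac{1}{6\beta k} + O(k^{-3}),
\]
and sum termwise. Four classical sums appear: $\sum_{k=1}^n k = \frac{n(n+1)}{2}$, $\sum_{k=1}^n \ln k = \ln n!$ (controlled by Stirling), $\sum_{k=1}^n 1/k = \ln n + \gamma + O(1/n)$, and crucially the Glaisher--Kinkelin asymptotic
\[
\sum_{k=1}^n k\ln k = \Big(\frac{n^2}{2}+\frac{n}{2}+\frac{1}{12}\Big)\ln n - \frac{n^2}{4} + \ln A + o(1),
\]
where $A$ is the Glaisher--Kinkelin constant. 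All finite remainders---$\ln A$, $\gamma$, the boundary contributions of Stirling near $k=1$, and the convergent tail of the $O(k^{-3})$ error---are absorbed into the single constant $a_\beta$.

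Collecting terms by powers of $n$ and $\ln n$ is then routine bookkeeping. The $n^2\ln n$ coefficient comes solely from $(\beta/2)\sum k\ln k$ inside $-S_n(\beta)$ and reads off as $-\beta/4$. The $\ln n$ coefficient pools $-\beta/24$, $+1/4$ and $-1/(6\beta)$ from $-S_n(\beta)$ together with $-1/2$ from $-\ln n!$, yielding exactly $-(3+\beta/2+2/\beta)/12$. The $n^2$ coefficient recombines $-3\beta/8$ and $(\beta/4)\ln(\beta/2)$ (coming from $-S_n(\beta)$) with $(\beta/4)\ln(2\pi)$ from the $n(n-1)\ln(2\pi)$ factor; applying $\ln(2\pi)-\ln(\beta/2)=\ln(4\pi/\beta)$ then produces the asserted $\frac{\beta}{2}\bigl(\frac12\ln\frac{4\pi}{\beta}+\frac34\bigr)$.

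The main obstacle is the linear $n$-coefficient: it gathers contributions from $-\ln n!$, from $n\ln\Gamma(\beta/2)$, from $-\frac{\beta}{4}n\ln(2\pi)$, and from three subterms of $-S_n(\beta)$ involving $\ln(\beta/2)$ and $\ln(2\pi)$. Using $\ln(2\pi)=\ln 2+\ln\pi$ and $\ln(\beta/2)=\ln\beta-\ln 2$, and separating the coefficients of $(1-\beta/2)\ln\beta$, $(1+\beta/2)(\ln\pi-1)$, $\ln 2$, and $\ln\Gamma(\beta/2)$, reproduces precisely the expression stated in the lemma. This step is purely mechanical but is where arithmetic errors are most likely.
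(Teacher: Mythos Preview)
Your approach is correct and constitutes a more elementary, self-contained alternative to the paper's proof. The paper does not expand $S_n(\beta)=\sum_{k=1}^n\ln\Gamma(\beta k/2)$ by hand; instead it rewrites
\[
c_n = \frac{1}{n!}\,\Gamma_2\Big(n+1;\tfrac{2}{\beta},1\Big)\Big(\frac{\Gamma(\beta/2)}{(\beta/2)^{\beta/2}}\Big)^{n}\Big(\frac{4\pi}{\beta}\Big)^{\frac{\beta}{4}n^2-(\frac{1}{2}+\frac{\beta}{4})n}
\]
in terms of the Barnes double Gamma function and then imports the known asymptotic expansion of $\ln\Gamma_2(n+1;2/\beta,1)$ from Borot--Guionnet, combining it with Stirling for $\ln n!$. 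Your direct computation via Stirling plus the Glaisher--Kinkelin expansion of $\sum_{k=1}^n k\ln k$ is precisely the machinery that underlies that Barnes asymptotic, so the two proofs are equivalent in substance; yours avoids invoking the double Gamma function and the external reference, while the paper's is shorter once the citation is granted. One small slip to correct when you write it out: in your $n^2$ bookkeeping the contributions from $-S_n(\beta)$ should read $+\tfrac{3\beta}{8}$ and $-\tfrac{\beta}{4}\ln(\beta/2)$ (opposite signs to what you wrote), which is exactly what makes the identity $\ln(2\pi)-\ln(\beta/2)=\ln(4\pi/\beta)$ produce the stated coefficient.
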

\begin{proof}
It follows from the definition of $c_{n}$ that 
\[
c_{n}
=\frac{1}{n!}\Gamma_2\Big(n+1;\frac{2}{\beta},1\Big)\bigg(\frac{\Gamma(\frac{\beta}{2})}{(\frac{\beta}{2})^{\frac{\beta}{2}}}\bigg)^n\Big(\frac{4\pi}{\beta}\Big)^{\frac{\beta}{4}n^2-(\frac{1}{2}+\frac{\beta}{4})n},
\]
where $\Gamma_2(x;\frac{2}{\beta},1)$ is a particular instance of the Barnes double Gamma function, see \cite{BG24,Spr09}. Adapting \cite[eq. (7.17)]{BG24}, the following asymptotic expansion holds
\begin{align*}
\ln \Gamma_2\Big(n+1;\frac{2}{\beta},1\Big)	
=&-\frac{\beta}{4}n^2\ln n+\frac{3\beta}{8}n^2+\frac{1}{2}\Big(1-\frac{\beta}{2}\Big)n\ln n-\frac{1}{2}\Big(1-\frac{\beta}{2}\Big)n\\
&+\bigg(\frac{1}{2} -\frac{3+\frac{\beta}{2}+\frac{2}{\beta}}{12}\bigg)\ln n +a_{\beta}'+o(1),
\end{align*}
where $a_{\beta}'\in \IR$ is some constant with $a_{2}'=\frac{1}{2}\ln 2\pi-\zeta'(-1)$. Combined with Stirling's approximation $\ln n!=n\ln n-n+\frac{1}{2}\ln(2\pi n)+o(1)$ we deduce the claimed expansion.
\end{proof}

We are now ready to deduce the asymptotics for the volume of the self-adjoint Schatten classes $\bpb$ given in Theorem~\ref{thm:main}.

\begin{proof}[Proof of Theorem~\ref{thm:main}]
Let $\beta\in \{1,2,4\}$ and $p>1$. Recall that $d_n=\frac{\beta}{2}n^2+(1-\frac{\beta}{2})n$. By Lemma~\ref{lem:vol-partition} it holds that
\begin{equation} \label{eq:vol-asymp-parts}
\ln \vol(\bpb)
=\ln c_{n}+\frac{d_n}{p}\ln \Big(\frac{\beta nv_p}{2}\Big)-\ln \Gamma\Big(1+\frac{d_n}{p}\Big)+\ln Z_{n,p,\beta}.
\end{equation}
By Stirling's approximation, as $n\to\infty$,
\begin{align}\label{eq:p-term-1}
\ln \Gamma\Big(1+\frac{d_n}{p}\Big)
&=\frac{d_n}{p}\Big(\ln \frac{d_n}{p}-1\Big)+\frac{1}{2}\ln\Big(\frac{2\pi d_n}{p}\Big)+o(1)\notag\\
&=\frac{d_n}{p}\Big(2\ln n+\ln \frac{\beta}{2p}-1+\Big(1-\frac{\beta}{2}\Big)\frac{2}{\beta n}\Big)+\frac{1}{2}\ln\Big(\frac{\pi \beta n^2}{p}\Big)+o(1).
\end{align}
Therefore, 
\begin{align}\label{eq:p-term-2}
\frac{d_n}{p}\ln\Big(\frac{\beta n v_p}{2}\Big)
-\ln \Gamma\Big(1+\frac{d_n}{p}\Big)
=&-\frac{d_n}{p}\Big(\ln n -\ln(p v_p)-1+\Big(1-\frac{\beta}{2}\Big)\frac{2}{\beta n}\Big)+o(n)\notag\\
=&-\frac{\beta}{2p}n^2\ln n+\frac{\beta}{2p}\big(\ln(pv_p)+1\big)n^2\notag\\
&-\frac{1}{p}\Big(1-\frac{\beta}{2}\Big)n\ln n+\frac{\ln(pv_p)}{p}\Big(1-\frac{\beta}{2}\Big)n +o(n).
\end{align}

Combining Theorem~\ref{thm:LS}, Lemma~\ref{lem:cn} and \eqref{eq:p-term-2} with \eqref{eq:vol-asymp-parts} and collecting terms completes the proof of \eqref{eq:main-1} in Theorem~\ref{thm:main}.

For the proof of \eqref{eq:main-2} in Theorem~\ref{thm:main} we proceed analogously, replacing Theorem~\ref{thm:LS} by Theorem~\ref{thm:CKM}.
\end{proof}

\subsection*{Acknowledgement}

The author is grateful to Thomas Lebl\'e for a helpful discussion regarding the case $p=1$. Funded by the Deutsche Forschungsgemeinschaft (DFG, German Research Foundation) through the SPP 2265 Random Geometric Systems and under Germany's Excellence Strategy EXC 2044/2 - 390685587, Mathematics Münster: Dynamics-Geometry-Structure. This research was funded in whole or in part by the Austrian Science Fund (FWF) [Grant DOI: 10.55776/J4777]. For open access purposes, the author has applied a CC BY public copyright license to any author-accepted manuscript version arising from this submission.

\bibliographystyle{abbrv}
\bibliography{schatten}

\end{document}